\newtheorem{theorem}{Theorem}
\newtheorem{theoreme}{Theorem}
\newtheorem{theoremp}{Theorem}
\newtheorem{theoremr}{Theorem}
\newtheorem{theoremno}{Theorem}
\newtheorem{examp}[theoreme]{Example}
\newtheorem{prop}[theoremp]{Proposition}
\newtheorem{rk}[theoremr]{Remark}
\newtheorem{dfn}[theoremno]{Definition\!\!}
\renewcommand{\thetheoremno}
\newcommand\bib[1]{\bibitem[#1]{#1}}
\newcommand{\comm}[1]{}
\newcommand\1{{\bf 1}}
\renewcommand\a{\alpha}
\renewcommand\d{\delta}
\newcommand\D{{\mathcal D}}
\newcommand\E{{\mathcal E}}
\newcommand\g{\mathfrak{g}}
\newcommand\GG{{\mathcal G}}
\newcommand\Ll{{\let\mathcal\mathscr\mathcal L}}
\newcommand\m{\mathfrak{m}}
\newcommand\op[1]{\mathop{\rm #1}\nolimits}
\newcommand\ot{\otimes}
\newcommand\R{{\mathbb R}}
\newcommand\s{\mathfrak{s}}
\renewcommand\S{{\mathcal S}}
\begin{document}

 \title[Symmetries and filtered Lie equations]{Symmetries of filtered structures \\
        via filtered Lie equations}
 \author{Boris Kruglikov}
\subjclass[2010]{58A30, 34C14, 35A30; 58J70, 34H05}
\keywords{Vector distribution, symmetry, Tanaka algebra, Lie equation,
filtered structure, weighted symbol}
 \maketitle

  \vspace{-15pt}
 \begin{abstract}
We bound the symmetry algebra of a vector distribution, possibly equipped with an additional
structure, by the corresponding Tanaka algebra. The main tool is the theory of weighted jets.
 \end{abstract}

 \section{Introduction and the main result}\label{S1}

Consider a manifold $M$ and a non-holonomic (bracket generating) vector distribution $\Delta\subset TM$,
possibly equipped with an additional structure, like sub-Riemannian metric or conformal or CR-structure.
A specification of these filtered structures is provided below.

Given such a structure, a sheaf of graded Lie algebras $\g=\oplus\g_i$ is naturally associated with it.
If we consider only the distribution $\Delta$, then $\m(x)=\g_-(x)$ is the well-known graded nilpotent
Lie algebra (GNLA: nilpotent approximation or Carnot algebra) at $x\in M$, and $\g(x)$
is its Tanaka prolongation (Tanaka algebra). If an additional structure on $\Delta$
is given, then $\g_0$ or some higher $\g_i$ ($i>0$) is reduced and the algebra is further prolonged.
In any case for a filtered structure $\mathcal{F}$ on $M$ we associate its sheaf of Tanaka algebras
$\g(x)$, $x\in M$.

 \begin{theorem}\label{thm1}
The symmetry algebra $\S$ (possibly infinite-dimensional) of a filtered structure $\mathcal{F}$
has the natural filtration with the associated grading $\s$ naturally injected
into $\g(x)$ for any regular point $x\in M$. In particular,
 $$
\dim\S\le\sup_M\dim\g(x).
 $$
Provided that the filtered structure is of finite type ($\g_\kappa(x)=0$ for some
$\kappa>0$ and all $x\in M$),
the right-hand-side can be changed to $\inf\limits_M\dim\g(x)$.
 \end{theorem}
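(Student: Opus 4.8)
The plan is to realize the symmetries as germs of solutions of a filtered Lie equation at a regular point $x$, to identify the weighted symbol of this equation and all of its weighted prolongations with the Tanaka algebra $\g(x)$, and then to invoke the general principle that the associated graded of the solution space of such an equation embeds into its symbol.

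\emph{Step 1: weighted jets and the symbol of the Lie equation.} Fix a regular point $x\in M$ and choose privileged (adapted) coordinates, so that to leading weighted order at $x$ the flag $\Delta=\Delta^1\subset\Delta^2\subset\cdots\subset\Delta^{\mu}=TM$ together with the extra structure coincides with the flat model of type $\m(x)$. Functions, vector fields and $\mathcal F$ then carry a weighted filtration; write $\m_x^k$ for the ideal of functions of weighted order $\ge k$, and say that a vector field $X$ has weighted order $\ge\ell$ at $x$ if $X(\m_x^k)\subseteq\m_x^{k+\ell}$ for all $k$. A germ of a symmetry is a vector field preserving $\Delta$ and the extra structure; such germs are the solutions of an overdetermined system $\mathcal R$, the filtered Lie equation. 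Following Tanaka's theory in the weighted-jet language of Morimoto, one computes the weighted symbol of $\mathcal R$ and its successive prolongations at $x$ and identifies them with $\g_\ell(x)$, $\ell\ge 0$: in degree $0$ one recovers the derivations of $\m(x)$ preserving the extra structure, and in higher degrees the computation reproduces, step by step, the recursive definition of the Tanaka prolongation. I expect this identification to be the main obstacle; it amounts to matching the Spencer cocycle governing the prolonged symbols with the algebraic cocycle defining $\g_\ell(x)$, to checking that the weighted filtration is compatible with the Lie bracket (the weighted order of a bracket being at least the sum of the orders of its arguments, so that the relevant symbol maps are Lie homomorphisms), and to the fact that the weighted-homogeneous infinitesimal symmetries of the flat model of type $\m(x)$ are precisely $\bigoplus_\ell\g_\ell(x)=\g(x)$.

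\emph{Step 2: the filtration of $\S$ and the injection.} Set $\S_\ell=\S_\ell(x)=\{X\in\S \text{ of weighted order }\ge\ell \text{ at }x\}$. As every vector field has weighted order $\ge-\mu$, this is an exhaustive descending filtration $\S=\S_{-\mu}\supseteq\cdots\supseteq\S_0\supseteq\S_1\supseteq\cdots$, and the estimate on the order of a bracket gives $[\S_k,\S_\ell]\subseteq\S_{k+\ell}$; hence $\s=\bigoplus_\ell\s_\ell$ with $\s_\ell=\S_\ell/\S_{\ell+1}$ is a graded Lie algebra. Assigning to $X\in\S_\ell$ its leading weighted symbol $\sigma_\ell X$ gives a map $\s_\ell\to\g_\ell(x)$: it lands in $\g_\ell(x)$ because $\sigma_\ell X$ is an infinitesimal symmetry of the flat model (Step 1), and it is injective because $\sigma_\ell X=0$ means $X\in\S_{\ell+1}$. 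Since leading symbols respect brackets, $\sigma\colon\s\to\g(x)$ is an embedding of graded Lie algebras; this proves the first assertion. (In degree $\ell<0$ the map is simply $X\mapsto X_x\bmod\Delta^{-\ell-1}_x$, and in degree $0$ it is the linearization of $X$ at $x$, a derivation of $\m(x)$ preserving the extra structure.)

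\emph{Step 3: the dimension bounds.} For any finite-dimensional $V\subseteq\S$ the induced filtration gives $\dim V=\sum_\ell\dim\big((V\cap\S_\ell)/(V\cap\S_{\ell+1})\big)\le\sum_\ell\dim\g_\ell(x)=\dim\g(x)$, provided the filtration of $V$ is separated, i.e.\ $\bigcap_\ell\S_\ell(x)=0$. If $\g(x)$ is infinite-dimensional, the estimate $\dim\S\le\sup_M\dim\g(x)$ is vacuous. Otherwise Tanaka's prolongation lemma gives $\g_\kappa(x)=0$ for some $\kappa>0$, hence $\g_\ell(x)=0$ for all $\ell\ge\kappa$, so $\S_\kappa=\S_{\kappa+1}=\cdots=\bigcap_\ell\S_\ell(x)$; moreover the symbol of $\mathcal R$ then has vanishing $\kappa$-th prolongation, so after sufficiently many prolongations the Lie equation is of finite type and a symmetry is determined by a finite weighted jet at $x$. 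Since an element of $\S_\kappa$ has trivial weighted jet at $x$, unique continuation along paths together with the connectedness of $M$ forces $\bigcap_\ell\S_\ell(x)=0$. Hence $\dim\S\le\dim\g(x)$ at every regular point $x$, which gives the displayed inequality. In the finite-type case this bound holds at every regular point, and since $x\mapsto\dim\g(x)$ is upper semicontinuous (at a non-regular point $\m(x)$ degenerates and its Tanaka prolongation can only grow), the infimum of $\dim\g$ over $M$ is attained at regular points, so the right-hand side may be replaced by $\inf_M\dim\g(x)$.
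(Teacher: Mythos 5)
Your overall strategy is the same as the paper's: filter $\S$ by weighted vanishing order at a regular point $x$, inject the graded pieces $\s_i$ into $\g_i(x)$ via the weighted symbol of the filtered Lie equation, and obtain the $\inf$ refinement from density of regular points plus upper semicontinuity of $\dim\g(x)$. Steps 2 and 3 essentially reproduce the paper's argument. The genuine gap is Step 1: the identification of the weighted symbols of the (prolonged) Lie equation with the Tanaka prolongation $\g_\ell(x)$ is exactly where the content of the theorem lives, and you only announce it (``I expect this identification to be the main obstacle'') without carrying it out. Two things are concretely missing. First, the Lie equation $L_X\Delta\subset\Delta$ together with the conditions preserving the reductions is not formally integrable, and its naive weighted symbol and prolongations are larger than $\g_\ell(x)$; one must first complete the equation to involution by prolongation--projection --- adding e.g.\ $L_X\Delta_s\subset\Delta_s$, the derivation conditions that cut degree $0$ down to $\mathfrak{der}_0(\m_x)$, the curvature conditions $L_XK=0$, and possibly higher-order compatibilities --- and it is the symbol of the completed equation $\E$ that lands in $\g_i(x)$. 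The paper's Section \ref{S4} (the $D$-module $D[\Box]$, the quotients $Q[\Box]_{-i}$, and the dual diagram producing the $\E_i$) exists precisely to make this precise; moreover the notion of \emph{regular point} in the statement is defined through that completion (projectivity of the localized $Q[\Box]_{-i}$), not through the existence of privileged coordinates, so your argument never engages with the hypothesis under which the claim is asserted. Second, for a general filtered structure the reductions may occur at levels $i_k>0$ of the tower $\GG_{i_k}$; these are invisible in the weighted order-$0$ ``flat model of type $\m(x)$'' of your Step 1, so matching the symbol with the reduced-and-reprolonged algebra $\g_{-\nu}\oplus\dots\oplus\tilde{\g}_{i}\oplus\dots$ requires lifting $X$ to the prolongation bundles, which again only enters after prolonging the equation.

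A smaller point: in Step 3 the separation $\bigcap_\ell\S_\ell(x)=0$ for smooth non-analytic finite-type structures does not follow from ``unique continuation along paths'' alone; it is the finite jet-determination theorem (Theorem 8 of \cite{K1}, invoked by the paper in the Proposition identifying $\S$ with $\E_\kappa$), which itself rests on the completed equation being of finite type. Your observation that the bound is vacuous when $\sup_M\dim\g(x)=\infty$, and the semicontinuity argument for replacing $\sup$ by $\inf$ in the finite-type case, both match the paper.
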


If $\Delta$ is considered without an extra structure, then this statement was
proved in \cite{K1} by studying the Lie equation considered as a submanifold in the usual jet-space.
In addition to the above mentioned type structures we can impose curvature of the structure
as a reduction of $\g$ (see Remark \ref{rk3} in Section \ref{S3} on the issue of regularity),
thus essentially restricting the symmetry algebra, yielding a bound
on the gap between the maximal and the next (submaximal) dimensions of the possible symmetry
algebras of the given type structures \cite{K2}.
In the context of parabolic geometries (in the complex-holomorphic or split-real smooth cases) 
this gap was fully computed using the above idea in \cite{KT}.

Regularity for the points $x$ in Theorem \ref{thm1} is defined via the Lie equations in Section \ref{S4}.
When $\sup\dim\g(x)$ is finite (that is $\mathcal{F}$ is of finite type)
or the filtered structure $\mathcal{F}$ is analytic, then the set of regular
points is open and dense; in general a generic point is regular.

In this paper we consider weighted jets and relate them to Tanaka algebras.
On this way we obtain another proof of Theorem 1 of \cite{K1} and get a more general result.

 \section{Tanaka algebra of a distribution with a structure}\label{S2}

Given a distribution $\Delta$ its \emph{weak derived flag} $\{\Delta_i\}_{i>0}$, with $\Delta_1=\Delta$,
is given via the module of its sections by $\Gamma(\Delta_{i+1})=[\Gamma(\Delta),\Gamma(\Delta_i)]$.
The distribution $\Delta$ will be assumed \emph{completely non-holonomic}, meaning there exists a natural
number $\nu$ such that $\Delta_\nu=TM$.

The quotient $\g_i=\Delta_{-i}/\Delta_{-i-1}$ (we let $\Delta_0=0$) evaluated at the points $x\in M$ is not a vector bundle in general (rank needs no be constant); however its local sections form a sheaf and
the module of its global sections will be denoted by $\Gamma(\g_i)$ (similarly for other sheafs).

At every point $x\in M$ the vector space $\m=\oplus_{i<0}\g_i$ has a natural structure of
graded nilpotent Lie algebra. The bracket on $\m$ is induced by the commutator
of vector fields on $M$. $\Delta$ is called \emph{strongly regular} if the GNLA $\m=\m_x$ does not depend
on the point $x\in M$.

The Tanaka prolongation $\g=\hat\m$ is the graded Lie algebra with negative graded part $\m$
and non-negative part defined successively by (see [AK] for discussion and interpretation of this prolongation)
 $$
\g_k=\{u\in\bigoplus\limits_{i<0}\g_{k+i}\ot\g_i^*:
u([X,Y])=[u(X),Y]+[X,u(Y)],\ X,Y\in\m\}.
 $$
Since $\Delta$ is bracket-generating, the algebra $\m$ is fundamental, i.e.
$\g_{-1}$ generates the whole GNLA $\m$, and therefore the grade $k$ homomorphism $u$
is uniquely determined by the restriction $u:\g_{-1}\to\g_{k-1}$.

At every point $\g=\oplus\g_i$ is naturally a graded Lie algebra, called
the {\em Tanaka algebra\/} of $\Delta$ (the bracket is induced by the commutator of vector fields).
To indicate dependence on the point $x\in M$, we will write $\g=\g(x)$ or $\g_x$
(also the value of a vector field $Y$ at $x$ will be denoted by $Y_x$).

In addition to introducing the Lie algebra $\g$, which majorizes the symmetry algebra of $\Delta$,
the paper \cite{Ta} contains the construction of an important ingredient to the equivalence problem
-- an absolute parallelism on the prolongation manifold $\GG_i$ of the structure, provided it is
strongly regular.

Distribution is locally flat if the structure functions of the absolute parallelism vanish.
Then the distribution $\Delta$ is locally diffeomorphic
to the standard model on the Lie group corresponding to $\m$, see \cite{Ta}.

The prolongation manifolds are the total spaces of the bundles $\GG_i\to M$ (in the strongly regular case).
For instance, the fiber of $\GG_0$ over $x$ consists of all grading preserving isomorphisms
of Lie algebras $u_0:\m\to\m_x$, where $\m$ is an abstract GNLA of the same type as $\m_x$.
Denoting by $\op{Aut}_0(\m)$ the group of grading preserving automorphisms of $\m$,
we conclude that $\GG_0$ is a principal $\op{Aut}_0(\m)$-bundle over $M$;
the tangent to the fiber is the Lie algebra $\mathfrak{der}_0(\m)$ of grading preserving
derivations of $\m$. The fiber of $\GG_1$ over $u_0\in\GG_0$
consists of the adapted frames $u_1:\g_{-1}\to\g_0$ that uniquely extend to the grading $-1$ maps
$u_1:\m\to\m\oplus\g_0$ etc, see \cite{Z} for details.

The idea of constructing the frame bundle can be pushed to the general non-strongly regular case.
Here an abstract reference algebra $\m$ is lacking\footnote{This manifests lacking of a canonical
absolute parallelism, though it is possible to construct a parallelism respected by any automorphism.}
and the 0-frames are grading preserving isomorphisms $u_0:\m_x\to\m_x$.
Thus $\GG_0$ is a fiber bundle (having a distinguished 'identity' section), with
the fiber over $x$ being the graded group $\op{Aut}_0(\m_x)$. Again the fiber of $\GG_1$
over $u_0(x)\in\GG_0$ is parametrized by the adapted frames $u_1:\g_{-1}(x)\to\g_0(x)$ etc.
The fibers of $\GG_i\to\GG_{i-1}$ for $i>0$ are isomorphic to $\g_i$.

Notice that in general, the prolongation manifolds $\GG_i$ can be singular, 
when the fibers have varying dimensions.
The structure is called {\em regular\/} at the point $x$ if for every $i$ the spaces $\g_i(y)$ have constant ranks and vary smoothly in $y$ from a neighborhood of $x$ (the size of which can depend on $i$).
The structure is called {\em regular\/} if it is regular at every point on $M$.
It is clear that strongly regular distributions have regular prolongations.
For regular distributions the above $\GG_i$ are smooth fiber bundles over $M$.

The following statement is similar to \cite[Lemma~6]{K1} and \cite[Lemma 4.2.4]{KT},
so its proof is omitted.
  \begin{prop}\label{prop1}
For every $i$ the set of points $x$, where $\op{rank}\g_i(x)$ is locally constant is open and
dense in $M$. \qed
  \end{prop}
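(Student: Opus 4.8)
The plan is to reduce the statement to a semicontinuity property of the dimension function together with a soft topological lemma. First I would record the elementary fact that any integer-valued function $f$ on $M$ with $0\le f\le\dim M$ which is either upper or lower semicontinuous is locally constant on an open dense subset of $M$: the locally constant locus is open by definition, and for density one takes a nonempty open $V$, tests whether $\{f\le 0\}\cap V$ (in the lower semicontinuous case) has nonempty interior --- if so, $f\equiv 0$ there --- and otherwise passes to the open dense nonempty set $V\cap\{f\ge 1\}$ and repeats, the procedure terminating after at most $\dim M+1$ steps since $f$ is bounded; the upper semicontinuous case is symmetric. Because a superset of a dense set is dense, it then suffices, for each fixed $i$, to exhibit an open dense $W\subseteq M$ on which $x\mapsto\op{rank}\g_i(x)$ is semicontinuous.

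Next I would dispose of the negative degrees. For $i<0$ one has $\g_i(x)=\Delta_{-i}(x)/\Delta_{-i-1}(x)$, hence $\dim\g_i(x)=\dim\Delta_{-i}(x)-\dim\Delta_{-i-1}(x)$. Each $\Delta_k$ is locally spanned by finitely many iterated brackets of sections of $\Delta$, so $x\mapsto\dim\Delta_k(x)$ is lower semicontinuous and therefore, by the first step, locally constant on an open dense set $O_k$ (one also sets $O_0=M$). On the open dense set $W:=O_{-i}\cap O_{-i-1}$ the difference $\dim\g_i$ is then locally constant outright, which is more than enough.

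I would then treat $i\ge 0$ by induction on $i$, the negative degrees serving as the base. Fix $i\ge 0$ and assume the statement for all smaller indices. Let $W$ be the intersection of $O_1,\dots,O_\nu$ with the open dense sets $R_0,\dots,R_{i-1}$ on which $\op{rank}\g_0,\dots,\op{rank}\g_{i-1}$ are locally constant; being a finite intersection, $W$ is open and dense. On $W$ the subbundles $\Delta_k$ have locally constant rank, so they are smooth subbundles of $TM$, the quotients $\g_j$ ($j<0$) are smooth vector bundles, and the commutator of vector fields exhibits $x\mapsto\m_x$ as a smooth family of GNLAs. Moreover, for $0\le j\le i-1$ the space $\g_j(x)$ has locally constant rank on $W$ and, as one checks by a descending recursion, is a smooth subbundle of $\bigoplus_{k<0}\g_{j+k}\ot\g_k^*$ over $W$, cut out as the kernel of a smooth constant-rank bundle morphism $L_j$. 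Consequently the Tanaka prolongation operator $L_i$, sending $u$ to $(X,Y)\mapsto u([X,Y])-[u(X),Y]-[X,u(Y)]$ and having kernel $\g_i(x)$, is a smooth morphism of smooth vector bundles over $W$; hence $x\mapsto\dim\g_i(x)=\dim\ker L_i(x)$ is upper semicontinuous on $W$, and the first step concludes.

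The one point requiring genuine care --- and the main obstacle in making this rigorous --- is the verification in the inductive step that local constancy of the ranks of all lower pieces really does force all the relevant data, namely the GNLA structure constants of $\m_x$, the subbundles $\g_j(x)$ for $j<i$, and hence the prolongation operator $L_i(x)$, to depend smoothly on $x$ throughout $W$, so that the finite-dimensional fact ``the rank of a smooth bundle morphism is lower semicontinuous'' may be invoked. This bookkeeping is exactly the one carried out in \cite{K1} and \cite{KT}, which is why the argument is only sketched here.
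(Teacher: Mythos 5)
Your argument is correct, and it is essentially the intended one: the paper omits the proof of Proposition~\ref{prop1}, deferring to \cite[Lemma~6]{K1} and \cite[Lemma~4.2.4]{KT}, and those proofs run on exactly your scheme --- lower semicontinuity of $\op{rank}\Delta_k$ for the negative degrees, then induction on $i\ge0$ realizing $\g_i(x)$ as the kernel of a smooth bundle morphism over the open dense set where all lower pieces have locally constant rank, plus the elementary fact that a bounded integer-valued semicontinuous function is locally constant on an open dense set. The smoothness bookkeeping you flag at the end is indeed the only substantive content, and it is the part the paper itself outsources to those references.
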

Thus if the structure is of finite type,
i.e. $\g_\kappa(x)=0$ for some $\kappa>0$ and all $x\in M$, then the structure is regular on an
open dense set. Otherwise it is regular only on a set of the second category, but in fact assuming
analyticity one can show regularity on an open dense set.

 \section{Filtered structures as reductions of the derived flag}\label{S3}

The best known reduction of the filtered structure related to a strongly regular distribution
$\Delta$ is defined as follows. Let $\tilde{G}_0\subset\op{Aut}_0(\m)$ be a subgroup with
the Lie algebra $\tilde{\g}_0\subset\g_0=\mathfrak{der}_0(\m)$.
A structure of the type $(\m,\tilde{\g}_0)$ (more appropriate would be to write $(\m,\tilde{G}_0)$) is
a principal sub-bundle of $\GG_0\to M$ with the structure group $\tilde{G}_0$.

We can relax the strong regularity assumption by requiring that $\g_i(x)\subset\m_x$ for $i<0$
and $\g_0(x)$ have $x$-independent ranks and then that the bundle $\GG_0\to M$ and its
reduction are smooth (the choice of $\tilde{G}_0$ should also depend smoothly on $x$).
This is a milder regularity assumption in this case.

 \begin{examp}\label{ex1}\rm
Sub-Riemannian structure is a field of Riemannian structures $g$ on the distribution
$\Delta=\g_{-1}$. It is equivalent to a reduction of the structure group to a (subgroup of)
the orthogonal group of $\g_{-1}$, implying the reduction to $\tilde{\g}_0\subset\mathfrak{der}_0(\m)\cap\mathfrak{so}(\g_{-1},g)$.
 A sub-conformal structure is a reduction
of the structure group to a (subgroup of) the linear conformal group $\mathfrak{co}(\g_{-1},[g])$
intersected with $\mathfrak{der}_0(\m)$.
 \end{examp}

 \begin{examp}\label{ex2}\rm
Cauchy-Riemann (CR-) structure is a field of complex structures $J$ on the distribution
$\Delta=\g_{-1}$. It is equivalent to a reduction of the structure group to a (subgroup of)
the complex linear group of $\g_{-1}$ (which shall be thus of even rank), implying the reduction $\tilde{\g}_0\subset\mathfrak{der}_0(\m)\cap\mathfrak{gl}(\g_{-1},J)$.
 \end{examp}

 \begin{rk}
As follows already from these examples, the regularity assumption is also restrictive.
The derived flag of a general sub-Riemannian structure can have varying length
at different points. For a CR structure, for instance of the hypersurface type,
the Levi-flat and generic points can co-exist. However as our main concern is the symmetry algebra,
and it is determined at any regular point, we disregard the most general structures.
 \end{rk}

Similarly, if the bundle $\GG_i$ is smooth for some $i>0$ we can define the reduction
via a $\g_0$-submodule $\tilde{\g}_i\subset\g_i$ (everything smoothly depends on the
point $x\in M$), and prolong the algebra $\g=\g_x$ afterwards, i.e. compute the Tanaka prolongations
 $$
\tilde{\g}_{i+s}=\op{pr}_s(\m,\g_0,\dots,\tilde{\g}_i)=
\{v\in\g_{i+s}:\op{ad}_{\g_{-1}}^s(v)\in\tilde{\g}_i\}
 $$
and define the new Tanaka algebra as
$\g=\g_{-\nu}\oplus\dots\oplus\g_{i-1}\oplus\tilde{\g}_i\oplus\dots$

 \begin{dfn}\label{filtstr}
The filtered structure $\mathcal{F}$ on a manifold $M$ is given by a non-holonomic vector distribution
$\Delta$ and a finite\footnote{Under the assumption of regularity or algebraicity, the finiteness
follows from (a graded version of) Hilbert's basis theorem.}
number of successive reductions of the above bundles $\GG_{i_k}$
(the new reduction concerns the bundles computed from the previous reductions) of
increasing orders $0\le i_1<\dots<i_s$.
Such structure is regular if the corresponding bundles $\GG_i$ are smooth
($\op{rank}\g_i=\op{const}_i$).
 \end{dfn}

Parabolic structures \cite{CS} give examples of filtered structures obtained by reduction of $\g_0$.
For instance, a conformal structure on a manifold $M$ is a reduction of the bundle $\op{End}(TM)$
in the case  $\m=\g_{-1}$ and the prolongation $\g=\g_{-1}\oplus\g_0\oplus\g_1$
is isomorphic to $\mathfrak{so}(p+1,q+1)$ for $n=p+q=\dim\m$.

In fact, by the Yamaguchi prolongation theorem \cite{Y} all but two parabolic geometries
of type $G/P$ for a (complex) simple Lie group $G$ with Lie algebra $\g=\op{Lie}(G)$ 
and a parabolic group $P$ are obtained via a reduction of $\GG_0$.
The two exceptional structures of types $A_n/P_1$ and $C_n/P_1$ 
can be obtained by a higher reduction and prolongation, as in the definition above.

 \begin{examp}\label{ex3}\rm
To obtain projective geometry, which is the parabolic geometry of type $A_n/P_1$
(in the real context $A_n=SL(n+1,\R)$, $P_1$ is the stabilizer of a line),
consider the algebra $\mathfrak{D}_\infty(\R^n)$ of formal vector fields on $V=\R^n$.
It has gradation $\g_{-1}=V$, $\g_0=V^*\ot V=\R\oplus\mathfrak{sl}(V)$, $\g_1=S^2V^*\ot V$, \dots

As a $\g_0$-module $\g_1$ decomposes into irreducible components $\g_1=\g_1'\oplus\g_1''$, where $\g_1'=(S^2V^*\ot V)_0=\op{Ker}(q:S^2V^*\ot V\to V^*)$
with $q$ being the contraction, and $g_1''=V^*\stackrel{i}\to S^2V^*\ot V$ with
$i(p)(v,w)=p(v)w+p(w)v$.

The prolongation of the first reduction is $\g'=\g_{-1}\oplus\g_0\oplus\g_1'\oplus\g_2'\oplus\dots$,
where $\g_k'=\op{Ker}(q:S^{k+1}V^*\ot V\to S^kV^*)$. This is the gradation of the algebra
$\mathfrak{SD}_\infty(\R^n)=\{\xi\in\mathfrak{D}_\infty(\R^n):\op{div}(\xi)=\op{const}\}$.
Indeed, one readily verifies $[\g_{-1},\g_1']=\mathfrak{sl}(V)\subset\g_0$ and $[\g_{-1},\g_{k+1}']=\g_k'$.

This gives one possible reduction of the bundle $\GG_1$. For the other reduction, the prolongation
is trivial and we get $\g''=\g_{-1}\oplus\g_0\oplus\g_1''=\mathfrak{sl}(n+1)$,
which is the grading associated to $A_n/P_1$.
 \end{examp}

 \begin{examp}\label{ex4}\rm
The Lagrangian contact structure gives the parabolic geometry of type $A_n/P_{1,n}$ 
($P_{1,n}$ stabilizes a hyperplane and a line in it) that is not exceptional: 
$\m=\g_{-2}\oplus\g_{-1}$ is the Heisenberg algebra, $\g_0$ is the subalgebra of the 
conformal symplectic algebra $\mathfrak{csp}(2n,\mathbb{R})$ that preserves a 
splitting $\g_{-1}=L_1\oplus L_2$ into a pair of Lagrangian subspaces. 
The Tanaka prolongation $\op{pr}(\m,\g_0)$ coincides with the Lie
algebra $\mathfrak{sl}(n+1)$ of $A_n$. 
If one torsion of this geometry vanishes (i.e. one field of Lagrangian subspaces is integrable), 
then the geometry can be interpreted as the parabolic Monge-Amp\`ere equations  
modulo point transformations.

Let us explain how this structure arises as a reduction 
of a simple infinite Lie algebra from Cartan's list.

Consider the algebra of formal contact vector fields in the standard contact space $\R^{2n-1}=J^1(\R^{n-1})$.
Every such a field is given by a Hamiltonian, so
$\mathfrak{cont}_\infty(\R^{2n-1})\simeq J_0^\infty(\R^{2n-1})$ -- the space of 
formal power series at $0$.
Denoting by $W$ the model symplectic space of $\dim W=2(n-1)$
(contact plane), the gradation of the contact algebra is:
$\g_{-2}=\R$, $\g_{-1}=W$, $\g_0=\mathfrak{csp}(W)=\R\oplus S^2W$, $\g_1=W\oplus S^3W$,
$\g_2=S^2W\oplus S^4W$, \dots

Now let $V\subset W$ be a Lagrangian plane, then $W=V\oplus V^*$ and $S^2W=S^2V\oplus(V\otimes V^*)
\oplus S^2V^*$. Notice that $S^2V,S^2V^*\subset\g_0$ are Abelian subalgebras.

Consider the following subalgebra $\g_0'=\R\oplus(V\otimes V^*)\subset\g_0$.
Its prolongation is $\g'=\g_{-2}\oplus\g_{-1}\oplus\g_0'\oplus\g_1'\oplus\g_2'$, where
$\g_1'=W^*\simeq W$, $\g_2'=\R$. Thus $\g'=\mathfrak{sl}(n+1)$ and the above grading corresponds to $A_n/P_{1,n}$,
the filtered structure being given by a reduction of $\GG_0$.
 \end{examp}

 \begin{rk}
Consider the canonical (Darboux) coordinates $x_i,p_i,u$ on $\R^{2n-1}$, corresponding
to the decomposition $V\oplus V^*\oplus\R$. The polynomial algebra on $\R^{2n-1}$ is weighted by
the rule $\op{w}(x^au^bp^c)=|a|+2(b-1)+|c|$, where $x^a=x_1^{a_1}\cdots x_{n-1}^{a_{n-1}}$,
$|a|=a_1+\dots+a_{n-1}$ for a multi-index $a=(a_1,\dots,a_{n-1})$ and similar for $p^c$ and $|c|$.
This weight is respected by the Jacobi bracket
 $$
\{f,g\}=f\cdot g_u+(f_{x^i}+p_if_u)\cdot g_{p_i}-g\cdot f_u-(g_{x^i}+p_ig_u)\cdot f_{p_i}
 $$
and it gives gradation of the algebra
$\g=\mathfrak{cont}_\infty(\R^{2n-1})\simeq\oplus_{k=0}^\infty S^k(\R^{2n-1})^*$:
$\g_{-2}=\langle 1\rangle$, $\g_{-1}=\langle x_i,p_i\rangle$,
$\g_0=\langle u\rangle\oplus\langle x_ix_j,x_ip_j,p_ip_j\rangle$ (entries in the last component
correspond respectively to $S^2V,V\otimes V^*,S^2V^*$), $\g_1=\langle ux_i,up_i\rangle\oplus
\langle x_ix_jx_k,x_ix_jp_k,x_ip_jp_k,p_ip_jp_k\rangle$, and so forth.
 \end{rk}

 \begin{examp}\label{ex5}\rm
The contact projective structure gives another exceptional parabolic geometry, of type $C_n/P_1$
($C_n=Sp(2n,\R)$).
It is obtained by a reduction of $\GG_1$ from the tower of bundles corresponding to the
formal symmetry algebra $\g$ of the standard contact structure, graded as above.
As a $\g_0$-module $\g_1$ decomposes into irreducible components $\g_1=W\oplus S^3W$.

For the reduction, corresponding to $\g_1'=S^3W$, we get the prolongations $\g_2'=S^4W$, $\g_3'=S^5W$ etc,
so $\g'=\g_{-2}\oplus\g_{-1}\oplus\g_0\oplus\g_1'\oplus\g_2'\oplus\dots$ is the gradation of
the algebra $\mathfrak{Csympl}_\infty(\R^{2(n-1)})$ of formal conformally symplectic vector fields
for $n>2$ (in this case any conformal symplectic transformation is a conformal homothety),
for $n=2$ it is a 1D extension of the algebra $\mathfrak{SD}_\infty(\R^2)$ from Example \ref{ex3}.

For the other choice $\g_1''=W\simeq W^*$ we compute the Tanaka prolongation
$\g_2''=\R$, $\g_3''=0$, whence $\g''=\g_{-2}\oplus\g_{-1}\oplus\g_0\oplus\g_1''\oplus\g_2''
=\mathfrak{sp}(2n,\R)$
and this gradation corresponds to $C_n/P_1$.
 \end{examp}

 \begin{rk}\label{rk3}
If the filtered geometry is strongly regular and $\GG_i$ form the tower of canonical bundles, then
in the limit (on $\GG_\kappa$ in the finite type case or on $\GG_\infty$) we get
the canonical frame, the structure functions of which produce the invariant - curvature $K$
of the structure. The constraint that $K$ (or its projective type) is preserved is another
reduction of the filtered structure, mentioned in the Introduction.
(For conformal Lorenzian 4D structures this reduction corresponds to prescribing the Petrov type of the structure.)

Notice that $K$ needs not have the same type at different points, so this (generalized) filtered
structure is neither an infinitesimal homogeneous geometry in the sense of \cite{GS} nor
a reduction type considered in the above Definition. It is rather similar to
the geometric structures obtained by imposing a higher degree tensor on $\Delta$ or 
to the Finsler structures. The theory of Lie equations developed below is applicable to such structures.
 \end{rk}

 \section{Weighted jets and filtered Lie equations}\label{S4}

Let us start with the structure $\mathcal{F}$ defined by the filtration $\Delta_i$
of the tangent bundle $TM$ given by the derived flag of the distribution $\Delta$.

The spaces $F_{-i}=\Gamma(\Delta_i)$ form the decreasing filtration of the Lie algebra of vector fields
$\D(M)=F_{-\nu}\supset\dots\supset F_{-1}\supset0$
and induce the decreasing filtration of the associative algebra
$D=\op{Diff}(M)$ of scalar differential operators on $M$
($D$ is a left-right bi-module over $C^\infty(M)$):
 \begin{multline*}
D_0=C^\infty(M)\subset D_{-1}=\{v+f:v\in F_{-1},f\in C^\infty(M)\}\subset\\
\subset D_{-2}\subset\dots\subset
D_j=\sum_{i_1+\dots+i_s\ge j}\prod_{t=1}^sF_{i_t}\subset D_{j-1}\subset\dots
 \end{multline*}
This filtration differs from the standard filtration by order.
The algebra $D$ of differential operators with the standard filtration has the
associated graded algebra of symbols evaluated to the bundle $STM=\oplus S^iTM$
(the algebra of polynomials on $T^*M$).
On the contrary the weighted filtration $D_i$ produces the bundle of symbols
$\oplus_{i\le0}D_i/D_{i+1}=U(\m)$ with evaluation at $x\in M$ being
the universal enveloping algebra of $\m_x$ (naturally graded).

Let $\Delta_i^\perp$ be the annihilators of the distribution $\Delta_i$
forming the decreasing filtration $\Omega^1_i=\Gamma(\Delta_{i-1}^\perp)$
of the space of 1-forms:
 $$
0\subset\Omega^1_\nu\subset\Omega^1_{\nu-1}\subset\dots\subset\Omega^1_2\subset\Omega^1_1=\Omega^1(M).
  $$
We have $\Omega^1_i/\Omega^1_{i+1}=\Gamma(\g_{-i}^*)$.
From these two filtrations we get the induced decreasing filtration of the module
 $$\mathfrak{D}=\op{Diff}(M;T^*M)=\op{Diff}(M)\ot_{C^\infty(M)}\Omega^1(M)$$
of 1-form valued differential operators on $M$ (here $C^\infty(M)$ acts on $D$ from the right,
$(\Delta,f)\mapsto\Delta\circ f$, $f\in C^\infty(M)$, $\Delta\in D$):
 $$
0\subset\mathfrak{D}_\nu=\Omega^1_\nu\subset\dots\subset
\mathfrak{D}_s=\sum_{i+j=s}D_i\ot_{C^\infty(M)}\Omega^1_j
\subset\mathfrak{D}_{s-1}\subset\dots
 $$
This filtration has the associated graded space, where every graded piece
$\mathfrak{D}_k/\mathfrak{D}_{k+1}$ is the space of sections of the vector bundle
 $$
\mathfrak{d}_k=\sum_{r+j=k}U(\m)_r\ot\g_{-j}^*,\ \quad -\infty<k\le\nu.
 $$
In other words we can embed
 $$
\mathfrak{d}_k\subset\sum_{r+j=k}\sum_{i_1+\dots+i_s=j}\prod_{t=1}^s
\g_{-r}^*\ot\g_{i_1}\ot\dots\ot\g_{i_t};
 $$
the equality is obtained by imposing on the right hand side the identifications from the universal
enveloping algebra $U(\m)$. Therefore $\oplus\mathfrak{d}_k$ is a module over $U(\m)$.

The natural action $F_{-1}\ot\mathfrak{D}_k\to\mathfrak{D}_{k-1}$ (composition) induces
the bundle map $\delta^*:\g_{-1}\ot\mathfrak{d}_{k+1}\to\mathfrak{d}_k$.

Consider the symmetry algebra of the distribution $\Delta$
(it can be sheafified and considered locally):
 $$
\mathcal{S}=\{X\in\D(M):L_X(\Delta)\subset\Delta\}=
\{\a(L_Xv)=0:\forall v\in\Gamma(\Delta),\a\in\Gamma(\Delta^\perp)\}.
 $$
Let $e_i^j$ be a basis of $TM$ such that $\Delta_s=\langle e_i^j:1\le j\le s\rangle$, $1\le s\le\nu$.
The dual co-basis $\a^p_q$ given by $\a^p_q(e_i^j)=\d^p_i\d^j_q$ yields
$\Delta_s^\perp=\langle\a^i_j:j>s\rangle$. We can write $\1=e_i^j\ot\a^i_j$.

Let $\a^p_q([e_i^j,e_k^r])=c_{ikq}^{jrp}$ denote the structure functions. Since
$L_v(X)=L_v(\a_j^i(X)e_i^j)=L_v(e_i^j)\a_j^i(X)+L_v(\a_j^i(X))e_i^j$ we obtain the defining
relation of $\mathcal{S}$ for $\a=\a_q^p$ ($q>1$) and $v=e^1_r$ in such form:
$\a_q^p(L_{e^1_r}(\1(X)))=0$ $\Leftrightarrow$ $\Box_{qr}^p(X)=0$, where
 $$
\Box_{qr}^p=c_{riq}^{1jp}\cdot\a_j^i+L_{e^1_r}\circ\a_q^p.
 $$
Thus the Lie equation $\mathfrak{Lie}(\Delta)$ defining $\mathcal{S}$ is given by
the linear differential operators $\Box_{qr}^p\in\mathfrak{D}$ ($q>1$):
 $$
\mathfrak{Lie}(\Delta)=\{[X]_x^1\in J^1(TM):\Box_{qr}^p(X)_x=0\}.
 $$

Notice that the Lie equation, as formulated, is not formally integrable (not in involution).
For instance, the compatibility conditions add the relations $L_X(\Delta_s)\subset\Delta_s$
for $s\le\nu$ and also $L_XK=0$, where $K$ is the curvature of the geometry.
Higher order equations can also appear as compatibilities.

Similarly, if $\mathcal{F}$ is a filtered structure, it is given by the Lie equation
$\mathfrak{Lie}(\mathcal{F})\subset J^k(TM)$ for some number $k$. This latter depends on
the reductions used to define $\mathcal{F}$ (the equations of $\mathfrak{Lie}(\Delta)$,
together with the compatibility conditions, guarantee that $X$ naturally lifts as
a vector field to $\GG_i$, then we impose the equation that the flow of $X$ preserves the
reductions). We keep denoting the defining differential operators by $\Box$.

Let $\E$ be the completion of the equation $\mathfrak{Lie}(\mathcal{F})$ to involution
obtained by the prolongation-projection method \cite{KLV,KL}. This equation is given by the differential
relations from the $D$-module (= module over differential operators $\op{Diff}(M)$)
$D[\Box]\subset\mathfrak{D}$ generated by the operators $\Box$.

We unite the obtained data into the following diagram, where all
horizontal arrows are monomorphisms and all vertical sequences are exact
(this defines the modules $Q[\Box]$):
 $$\begin{CD}
@. \dots @. \dots @. \dots @. \\
@. @VVV @VVV @VVV \\
0 @>>> D[\Box]_{1-i} @>>> \mathfrak{D}_{1-i} @>>> Q[\Box]_{1-i} @>>> 0 \\
@. @VVV @VV{\pi_{i,i-1}}^*V @VVV \\
0 @>>> D[\Box]_{-i} @>{\rho_i^*}>> \mathfrak{D}_{-i} @>>> Q[\Box]_{-i} @>>> 0 \\
@. @VVV @VVV @VVV \\
@. \dots @. \dots @. \dots @. \\
 \end{CD}$$

Recall that given a module $\mathfrak{M}$ over the algebra $C^\infty(M)$, which is
geometric (this means $\cap_{x\in M}(\mu_x\mathfrak{M})=0$; $\mu_x\subset C^\infty(M)$ are the maximal ideals)
and finitely generated, its property being projective is equivalent to
isomorphism of $\mathfrak{M}$ to the module of sections of a bundle. This is a smooth version
of the Serre-Swan theorem, see \cite{Sw,JN}. Localization (restriction) to an open set $U\subset M$ is
$\mathfrak{M}_U=\mathfrak{M}\otimes_{C^\infty(M)}C^\infty(M)/\mu_U$, where $\mu_S$ is the ideal
of functions vanishing on $S$; in other words, we change the ring to
$C^\infty(\bar{U}):=C^\infty(M)/\mu_U=\{f|_U:f\in C^\infty(M)\}\subset C^\infty(U)$. If all evaluations
$\mathfrak{M}_x=\mathfrak{M}\otimes_{C^\infty(M)}C^\infty(M)/\mu_x$, $x\in U$ have the same
rank, then by Nakayama's lemma $\mathfrak{M}_U$ is projective. Since geometricity and finite-generation
for our modules are given by construction, we obtain the associated vector bundles.

Namely, the modules of the above commutative diagram give rise to the vector bundles
$bD[\Box]$ (left column) and $\E^*$ (right column) over the set of regular points in $M$
(the middle column, consisting of projective modules, obviously corresponds to
sections of vector bundles over the whole $M$).
Here we call a point $x\in M$ {\it regular up to order $k$\/} if in a neighborhood $U\ni x$
the modules $Q([\Box])_{-i}|_U$ are projective for all $i\le k$;
a point $x\in M$ is {\it regular\/} if it is regular up to order $k$ for any $k$.

 \begin{prop}
If the filtered structure is of finite type or is analytic, then the set of regular points
for $\E$ is open dense in $M$. In general, the set of regular points up to order $k$ is
open dense in $M$ for every $k$; consequently the set of regular points has second category.
 \end{prop}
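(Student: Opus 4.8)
The plan is to reduce the statement to the already-established Proposition~\ref{prop1} about local constancy of $\op{rank}\g_i(x)$, together with the observation that regularity up to order $k$ is governed by finitely many rank functions. First I would unwind the definition: a point is regular up to order $k$ precisely when the modules $Q[\Box]_{-i}|_U$ are projective for all $i\le k$ on some neighborhood $U$. By the smooth Serre--Swan theorem quoted above (and Nakayama's lemma), projectivity of the finitely generated geometric module $Q[\Box]_{-i}$ near $x$ is equivalent to the evaluations $(Q[\Box]_{-i})_y$ having locally constant rank near $x$; since the middle column $\mathfrak{D}_{-i}$ already corresponds to a genuine vector bundle over all of $M$, local constancy of $\op{rank}(Q[\Box]_{-i})_y$ is equivalent to local constancy of $\op{rank}(D[\Box]_{-i})_y$. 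Thus ``regular up to order $k$'' is exactly the locus where each of the finitely many rank functions $y\mapsto\op{rank}(D[\Box]_{-i})_y$, $i\le k$, is locally constant.

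Next I would invoke the semicontinuity/openness--density mechanism behind Proposition~\ref{prop1}: for a finitely generated module over $C^\infty(M)$ the evaluation rank is lower semicontinuous, so for each fixed $i$ the set where $\op{rank}(D[\Box]_{-i})_y$ is locally constant is open and dense (this is precisely the content cited from \cite[Lemma~6]{K1}, \cite[Lemma 4.2.4]{KT}). A finite intersection of open dense sets is open and dense, which immediately gives that the set of points regular up to order $k$ is open and dense in $M$ for every $k$. This establishes the second assertion.

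For the finite type case, note that $\g_\kappa(x)=0$ for all $x$ forces the Lie equation $\mathfrak{Lie}(\mathcal{F})$ and its involutive completion $\E$ to be of bounded order: the filtration $D[\Box]_{-i}$ stabilizes for $i$ beyond some $\kappa'$ depending only on $\kappa$ and on the orders $i_1<\dots<i_s$ of the reductions, because all higher weighted symbols vanish. Hence ``regular'' coincides with ``regular up to order $\kappa'$'', and the preceding paragraph applies with a single finite $k=\kappa'$. In the analytic category, the rank functions $y\mapsto\op{rank}(D[\Box]_{-i})_y$ are analytic in an appropriate sense, so each is locally constant on an open dense set whose complement is a proper analytic subset; one then uses that a countable (in fact, on a relatively compact piece, finite by Hilbert's basis theorem applied to the associated graded as in the footnote to Definition~\ref{filtstr}) intersection of such sets is still open and dense, giving openness and density of the full regular set. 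The general case yields only a countable intersection of open dense sets, hence a residual (second category) set, as claimed.

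The main obstacle I anticipate is the bookkeeping needed to pass cleanly between the weighted-jet filtration of $\E$ and the finitely many rank conditions --- in particular, verifying that in the finite type (resp.\ analytic) setting only finitely many of the modules $Q[\Box]_{-i}$ need to be controlled, so that the argument does not collapse to a mere second-category conclusion. This is where the graded Hilbert basis theorem (footnote to Definition~\ref{filtstr}) and the finite-order nature of $\mathfrak{Lie}(\mathcal{F})\subset J^k(TM)$ must be used carefully; the rest is the standard openness--density argument already packaged in Proposition~\ref{prop1}.
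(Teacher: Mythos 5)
Your proposal is correct and, for the finite-type and general cases, follows essentially the same route as the paper: the paper's proof says that the number of equations added during prolongation--projection is finite (resp.\ finite up to any pre-fixed order $k$), invokes Proposition~\ref{prop1} to get an open dense set of local rank-constancy for each, and intersects finitely many open dense sets --- which is exactly your reduction to finitely many rank functions $y\mapsto\op{rank}(D[\Box]_{-i})_y$, $i\le k$. The one place where you genuinely diverge is the analytic infinite-type case: the paper simply cites Malgrange's proof of the Cartan--Kuranishi theorem (adapted to the filtered setting) to conclude that only finitely many genuinely new equations arise, after which the finite-intersection argument applies; you instead argue that each rank-jump locus is a proper analytic subset and then try to extract local finiteness of the family of such loci from a graded Hilbert basis theorem. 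Your version is more self-contained in spirit, but the finiteness you need there --- that beyond some order the modules $Q[\Box]_{-i}$ are controlled by the lower ones, so only finitely many analytic subsets occur --- is precisely the termination statement of Cartan--Kuranishi, and a bare appeal to Noetherianity of the associated graded does not by itself deliver it (the footnote to Definition~\ref{filtstr} concerns finiteness of the reductions defining $\mathcal{F}$, not termination of prolongation--projection). So you should either cite Malgrange as the paper does or accept that this step is a nontrivial input; with that attribution fixed, your argument is sound. A last cosmetic point: the relevant semicontinuity is upper semicontinuity of $\op{rank}(Q[\Box]_{-i})_y$ (equivalently lower semicontinuity of the rank of the image of $D[\Box]_{-i}$ in the fiber of $\mathfrak{D}_{-i}$); either formulation gives the open dense locus of local constancy you need.
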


 \begin{proof}
For the structure $\mathcal{F}$ of finite type the number of added equations
during the prolongation-projection is finite, cf. Proposition \ref{prop1}, whence the claim.
By the same reason, in the general case the number of added equations
during prolongation-projection is finite up to any pre-fixed order $k$.
For the structure $\mathcal{F}$ of infinite analytic type the claim follows from
Malgrange's proof of Cartan-Kuranishi theorem \cite{Ma} that can be adapted to the filtered context.
 \end{proof}

We are going to realize the dual bundles to those arising from the above commutative diagram.
To this end let $J(TM)_i$ be the space of $i$-weighted jets
w.r.t. the weights introduced above ($i\ge-\nu$)\footnote{In the case $\Delta=TM$ we have $\nu=1$
and the whole jet co-filtration should be shifted by $+1$ to match the Spencer machinery \cite{Sp};
for  gradation of differential operators also the sign should reverse.}, which
is defined as the bundle over $M$ with the fiber at $x$ being quotient of the algebra of
$\mathcal{D}(M)$ by the subspace of those vector fields $X$ that
satisfy $\nabla(X)_x=0$ for all $\nabla\in\mathfrak{D}_{-i}$.
In terms of decomposition $X=\sum f_{jl}Z_{jl}$ by a basis $Z_{jl}\in\Delta_j\setminus\Delta_{j-1}\subset TM$
from \cite{K1} this means that the coefficients satisfy $f_{jl}\in\mu_{\Delta,x}^{i+j}$.

Let $\E_i\subset J(TM)_i$ be the set of points annihilated by the operators $D[\Box]_{-i}\subset D[\Box]$
of order $\ge-i$ (as above $\E$ is the completion of $\mathfrak{Lie}(\mathcal{F})$).
Over the connected components of the (open and dense) set of regular points
(where restrictions of the considered modules are projective) these are linear subbundles
and by duality we get the following commutative diagram, where the vertical lines
are exact and the horizontal arrows are epimorphisms (projections):
 $$\begin{CD}
@. \dots @. \dots @. \dots @. \\
@. @VVV @VVV @VVV \\
0 @>>> \E_i @>>> J(TM)_i @>{\rho_i}>> bD[\Box]^*_i @>>> 0 \\
@. @VVV @VV{\pi_{i,i-1}}V @VVV \\
0 @>>> \E_{i-1} @>>> J(TM)_{i-1} @>>> bD[\Box]^*_{i-1} @>>> 0 \\
@. @VVV @VVV @VVV \\
@. \dots @. \dots @. \dots @. \\
 \end{CD}$$
This geometrizes the $D$-modules corresponding to the Lie equation.

\section{Proof of Theorem \ref{thm1} and other applications}\label{S5}

 \begin{prop}
If the filtered structure $\mathcal{F}$ is analytic or of finite type, then
the space of solutions of $\mathfrak{Lie}(\mathcal{F})$ is $\mathcal{S}=\E_\kappa$
(for the infinite type $\kappa=+\infty$ and $\E_\kappa=\lim\limits_{i\to+\infty}\E_i$).
 \end{prop}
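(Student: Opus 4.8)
The plan is to identify the solution space of the Lie equation $\mathfrak{Lie}(\mathcal{F})$ with the inverse limit $\E_\kappa$ of the weighted-jet subbundles $\E_i$ by a two-sided inclusion, using the prolongation-projection completion $\E$ of $\mathfrak{Lie}(\mathcal{F})$ to involution. First I would recall that, by construction, a formal solution of $\mathfrak{Lie}(\mathcal{F})$ at $x$ is precisely an element of $J(TM)_x^\infty$ annihilated by every differential operator in the $D$-module $D[\Box]$; grading this module by weighted order gives exactly the conditions cutting out $\E_i\subset J(TM)_i$ at each finite level, and compatibility of the projections $\pi_{i,i-1}$ shown in the last commutative diagram of Section \ref{S4} means the $\E_i$ form an inverse system whose limit $\E_\kappa=\varprojlim\E_i$ is the space of formal solutions. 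So the content of the proposition is that \emph{every formal solution is an actual (local) solution}, and conversely.

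The inclusion $\mathcal{S}\subset\E_\kappa$ is the easy direction: any genuine symmetry vector field $X$ of $\mathcal{F}$ satisfies $\Box(X)=0$ for all defining operators, hence satisfies all differential consequences, hence its weighted jet of every order lies in $\E_i$, so $X$ determines a point of $\E_\kappa$; one also notes the map $X\mapsto (j^i_x X)_i$ is injective on the sheaf level because a vector field vanishing to infinite weighted order at a point of a connected manifold, together with the fact that it solves an involutive (hence in the analytic case Cauchy-Kowalevski-governed, in the finite type case finite-dimensional and determined by a finite jet) system, must vanish. For the reverse inclusion $\E_\kappa\subset\mathcal{S}$, the key is that $\E$ is \emph{involutive}: after the Cartan-Kuranishi completion the system is formally integrable with involutive symbol, so the classical integration theorems apply. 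In the finite type case, $\E_\kappa$ has finite rank, the projections $\E_{i+1}\to\E_i$ stabilize to isomorphisms for $i\ge\kappa$, and formal integrability plus finite-dimensionality yields that each formal solution prolongs uniquely to a convergent/smooth local solution (the system becomes a frame-type Frobenius system on $\GG_\kappa$, whose flat sections integrate). In the analytic case one invokes the Cartan-Kähler theorem (equivalently Malgrange's formal-integrability-implies-local-solvability, already cited as \cite{Ma}) to produce, through each point of $\E_\kappa$, an analytic solution of $\mathfrak{Lie}(\mathcal{F})$, i.e. an element of $\mathcal{S}$.

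The main obstacle I expect is the analytic infinite-type case, where $\kappa=+\infty$ and one must make sense of $\E_\kappa=\lim_{i\to+\infty}\E_i$ and show the limiting jets are realized by honest analytic vector fields. Here the argument has to be that the prolongation-projection process, by Malgrange's version of Cartan-Kuranishi \cite{Ma} adapted to the filtered/weighted setting (as already invoked in the preceding Proposition on regularity), terminates in the sense that at each regular point the completed system $\E$ is formally integrable, and formal integrability of an analytic system guarantees an analytic solution through every formal solution point; thus the inverse limit of jets is not merely formal but geometric. A secondary technical point is checking that the weighted-order grading used to define the $\E_i$ really matches the order filtration on $D[\Box]$ compatibly with prolongation — i.e. that prolonging $\mathfrak{Lie}(\mathcal{F})$ once raises the relevant weighted order by exactly one and corresponds to intersecting with $\pi_{i+1,i}^{-1}(\E_i)$ — but this is bookkeeping with the filtrations on $D$ and $\mathfrak{D}$ set up in Section \ref{S4} rather than a genuine difficulty. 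Finally, one should remark that outside the analytic or finite-type hypotheses the statement can fail: there the correct replacement is that $\E_\kappa$ is the space of \emph{formal} symmetries and may strictly contain the space of smooth ones, which is why the hypothesis is imposed.
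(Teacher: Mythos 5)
Your proposal is correct and follows essentially the same route as the paper, which simply cites finite jet-determination (\cite[Theorem~8]{K1}) for the finite type case and Morimoto's filtered Cartan--K\"ahler theorem \cite{M2} for the analytic case; your expanded argument (formal solutions $=\E_\kappa$ via the $D$-module $D[\Box]$, the easy inclusion $\mathcal{S}\subset\E_\kappa$, and involutivity plus Frobenius/Cartan--K\"ahler integration for the reverse inclusion) is exactly what those citations encapsulate. The only small adjustment: in the analytic infinite-type case the relevant existence result is the \emph{filtered} Cartan--K\"ahler theorem of \cite{M2} rather than Malgrange's \cite{Ma}, since the weighted jet filtration replaces the ordinary one.
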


 \begin{proof}
In the finite type case the claim follows by finite jet-determination of a solution according to
\cite[Theorem~8]{K1}. For the analytic structure $\mathcal{F}$ the claim follows by
(infinite) jet-determination of a solution according to the filtered Cartan-K\"ahler theorem \cite{M2}.
 \end{proof}

It is enough to prove the main claim for structures of the type prescribed in this Proposition.
For general smooth filtered structures a symmetry (solution of the Lie equation) may not be
defined by its jet at a point, but in this case we only need the fact that the set of regular
points of $M$ is the intersection of the sets of points $x$ regular up to order $k$ by all $k$.

 \begin{proof}[Proof of Theorem \ref{thm1}]
Let $x\in M$ be a regular point (for the Lie equation).
The map $\mathcal{S}\to\E_\kappa$ associates to a symmetry $X$ its jet $[X]_x^\kappa$ at $x$.
It is an isomorphism for finite type or analytic structures.

Thus we obtain the decreasing filtration on $\mathcal{S}$ depending on the point $x$:
$\mathcal{S}^j=\op{Ker}(\mathcal{S}\to\E_{j-1})$.
The corresponding graded algebra coincides with the symbol of $\E$:
 $$
\mathfrak{s}_i=\mathcal{S}^i/\mathcal{S}^{i+1}=\op{Ker}(\E_i\to\E_{i-1}).
 $$
In particular $\mathfrak{s}_i\subset\g_i(x)$ and the claim  $\dim\mathcal{S}\le\dim\g(x)$ follows.

In the case of general type the right hand side of this inequality is $+\infty$, so the
second claim is void.

Finally, for finite type systems the first claim of the theorem implies the inequality
$\dim\mathcal{S}\leq\op{ess.}\inf\dim\g(x)$,
where the essential infimum is the supremum of $\inf_U\dim\g(x)$ by all open dense sets
$U\subset M$ (because the set of regular points is open and dense).
This latter equals to $\inf_M\dim\g(x)$ due to upper semi-continuity of $\g(x)$
(for the latter claim see \cite[Lemma~6]{K1}). The last claim of the theorem follows.
 \end{proof}

 \begin{rk}
The filtration on $\mathcal{S}$ coincides with the one from \cite{CN} in the case of parabolic geometries
and from \cite{K1} for non-holonomic distributions.
 \end{rk}

We can also prove the following claim that coincides with Proposition 4.1 of \cite{M1}
in the case of the standardly filtered Lie algebras and with Proposition 4.2.2 of \cite{KT}
in the case of filtration associated with parabolic geometries
(our present proof is different from both references).

 \begin{theorem}
The above embedding $\mathfrak{s}_i\subset\g_i$ satisfies:
$[\mathfrak{s}_i,\g_{-1}]\subset\mathfrak{s}_{i-1}$.
 \end{theorem}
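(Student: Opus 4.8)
The plan is to exploit the two exact commutative diagrams built in Section \ref{S4}, together with the compatibility between the Spencer-type operator $\delta^*$ on the symbol bundles and the $D$-module structure generated by the operators $\Box$. Concretely, the filtration $\mathcal{S}^j=\op{Ker}(\mathcal{S}\to\E_{j-1})$ is, at a regular point $x$, nothing but the order filtration by vanishing of weighted jets: $X\in\mathcal{S}^j$ precisely when $[X]_x^{j-1}=0$, i.e. when the weighted jet of $X$ at $x$ lies in $\E_i$ only for $i\ge j-1$ in a trivial way. The symbol $\mathfrak{s}_i$ sits inside $\g_i(x)$ because $\op{Ker}(\E_i\to\E_{i-1})\subset\op{Ker}(J(TM)_i\to J(TM)_{i-1})=\g_i(x)$ — this is the identification of the $i$-th graded piece of the weighted jet bundle with $\g_i$ that underlies the whole construction. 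What must now be shown is that bracketing an element of $\mathfrak{s}_i$ with $\g_{-1}$, \emph{inside} $\g(x)$, lands in $\mathfrak{s}_{i-1}$ and not merely in $\g_{i-1}(x)$.

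First I would recall the elementary fact that the symmetry algebra $\mathcal{S}$ is a \emph{Lie} subalgebra of $\D(M)$ and that the filtration $\mathcal{S}^\bullet$ is compatible with the bracket in the weak sense $[\mathcal{S}^i,\mathcal{S}^j]\subset\mathcal{S}^{i+j}$ — this is immediate from the fact that if $X$ vanishes to weighted order $\ge i$ at $x$ and $Y$ to weighted order $\ge j$, then $[X,Y]$ vanishes to weighted order $\ge i+j$ (the Leibniz rule for the weighted filtration $F_\bullet$ on $\D(M)$ induced by the derived flag). Passing to the associated graded, this gives a graded Lie bracket on $\s=\oplus\s_i$, and the statement $[\s_i,\g_{-1}]\subset\s_{i-1}$ is the assertion that this intrinsic bracket on $\s$ is the restriction of the one on $\g(x)$ together with the observation that $\s_{-1}=\g_{-1}(x)$. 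The point $\s_{-1}=\g_{-1}$ holds because a symmetry is never obstructed at the bottom weight: every tangent vector in $\Delta_x$ extends to a local symmetry vector field (the defining equations $\Box^p_{qr}$ are of positive weighted order relative to $\g_{-1}$), so $\op{Ker}(\E_{-1}\to\E_{-2})$ is all of $\g_{-1}(x)$.

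The key step is therefore to identify the graded bracket $\s\times\s\to\s$ with the Tanaka bracket under the inclusion $\s\hookrightarrow\g(x)$. I would do this weight by weight, but the only case I actually need is $[\s_i,\s_{-1}]$. Take $X$ a symmetry with $[X]_x^{i-1}=0$ representing $\xi\in\s_i$, and $v$ a symmetry representing a vector $w\in\g_{-1}(x)=\s_{-1}$, so $v_x=w$ modulo $\Delta_{-2}$; since $v$ may be chosen with $[v]_x^{-1}$ equal to any prescribed element and we only track its leading term, we can take $v$ to be (the symmetry extension of) a constant-coefficient field. Then $[X,v]$ is a symmetry and its class in $\s_{i-1}$ is, by the very definition of the Tanaka prolongation map $\g_k\ni u\mapsto([u(\g_{-1})])$, equal to $\op{ad}_{\xi}(w)$ computed in $\g(x)$: the weighted $(i-1)$-jet of $[X,v]$ is the derivative of the leading symbol of $X$ in the direction $w\in\g_{-1}$, which is exactly how $u=\xi\in\g_i\subset\bigoplus\g_{i+j}\ot\g_j^*$ acts. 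This is where the definition $\g_k=\{u:u([Y,Z])=[u(Y),Z]+[Y,u(Z)]\}$ and the remark that $u$ is determined by $u|_{\g_{-1}}$ enter decisively.

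The main obstacle I anticipate is the bookkeeping at non-regular-rank situations and making precise the clause ``$v$ may be taken constant-coefficient'': one must check that the leading weighted symbol of a symmetry only sees the \emph{value} $w\in\g_{-1}(x)$ and not the full jet of $v$, which requires that the correction terms — coming from the structure functions $c^{jrp}_{riq}$ and the non-flatness of $\Delta$ near $x$ — contribute only to strictly lower weight and hence drop out in the associated graded. Equivalently, one needs the compatibility of $\delta^*:\g_{-1}\ot\mathfrak{d}_{k+1}\to\mathfrak{d}_k$ with the $D$-module structure of $D[\Box]$, so that the symbol of the commutator is computed purely on the level of the bundles $\mathfrak{d}_\bullet$ and their $U(\m)$-module structure. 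Once this is in hand, the inclusion $[\s_i,\g_{-1}]\subset\s_{i-1}$ follows formally, and I would close by remarking that iterating gives $[\s_i,\s_{-j}]\subset\s_{i-j}$ for all $j>0$, recovering the statements of \cite{M1} and \cite{KT} in their respective settings.
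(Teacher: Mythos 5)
There is a genuine gap, and it sits at the heart of your argument: the claim that $\s_{-1}=\g_{-1}(x)$ because ``every tangent vector in $\Delta_x$ extends to a local symmetry vector field'' is false in general. A generic filtered structure has trivial (or very small) symmetry algebra, in which case $\s_{-1}=\op{Ker}(\E_{-1}\to\E_{-2})$ is a proper subspace of $\g_{-1}(x)$ — possibly zero. The paper makes exactly this point immediately after the theorem: the statement $[\s_i,\g_{-1}]\subset\s_{i-1}$ is \emph{strictly stronger} than the pairing $[\s_i,\s_{-1}]\subset\s_{i-1}$ that your filtration-compatibility argument $[\S^i,\S^j]\subset\S^{i+j}$ delivers, precisely because one only has $\s_{-1}\subset\g_{-1}$, and this distinction is what makes the result useful in \cite{KT}. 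Your strategy of representing $w\in\g_{-1}(x)$ by a symmetry field $v$ and computing $[X,v]$ therefore cannot be repaired within its own terms: for the $w$ that matter most there is no such $v$.

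The correct mechanism is the one you brush against in your final paragraph but treat as a side issue of bookkeeping. The point is that the defining relations of the completed Lie equation form a $D$-module $D[\Box]$, closed under left composition with \emph{arbitrary} sections of $F_{-1}=\Gamma(\Delta)$ — not merely with symmetries. Hence both $\mathfrak{D}_{1-i}$ and $D[\Box]_{1-i}$ are carried into the next filtration level by $F_{-1}$, and the quotient symbol bundles $\mathfrak{b}_{-i}$ (with $\Gamma(\mathfrak{b}_{-i})=Q[\Box]_{-i}/Q[\Box]_{1-i}$) inherit an action $\d^*:\g_{-1}\ot\mathfrak{b}_{1-i}\to\mathfrak{b}_{-i}$ of the full $\g_{-1}$. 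Identifying $\s_i\simeq\mathfrak{b}_{-i}^*(x)$ at a regular point (via $\s_i=\op{Ker}(\pi_{i,i-1})\cap\op{Ker}(\rho_i)$ in the second commutative diagram) and dualizing gives $\d:\g_{-1}\ot\s_i\to\s_{i-1}$, which is the Tanaka bracket. No symmetry field representing $w$ is ever needed. If you reorganize your proof around this $D$-module closure as the main step, and drop the false identification $\s_{-1}=\g_{-1}$, you recover the paper's argument.
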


 \begin{proof}
Let $\mathfrak{b}$ denote the symbol bundle of the $Q[\Box]$-modules over the regular set $U$:
$\Gamma(\mathfrak{b}_{-i})=Q[\Box]_{-i}/Q[\Box]_{1-i}$. At a regular point $x$:
$\mathfrak{s}_i\simeq\mathfrak{b}_{-i}^*(x)$. Indeed, from the commutative diagram above: $\mathfrak{s}_i=\op{Ker}(\pi_{i,i-1})\cap\op{Ker}(\rho_i)$
and so $\mathfrak{s}_i^\perp=\op{Im}(\pi_{i,i-1}^*)+\op{Im}(\rho_i^*)$. Therefore
$\mathfrak{s}_i^*=\mathfrak{D}_{-i}/[\op{Im}(\pi_{i,i-1}^*)+\op{Im}(\rho_i^*)]$.

By $D$-module property $F_{-1}\ot\mathfrak{D}_{1-i}\to\mathfrak{D}_{-i}$ and
$F_{-1}\ot D(\Box)_{1-i}\to D(\Box)_{-i}$, whence we conclude the action
$\d^*:\g_{-1}\ot\mathfrak{b}_{1-i}\to\mathfrak{b}_{-i}$ (at regular points).
By dualization $\d:\mathfrak{s}_i\to\g_{-1}^*\ot\mathfrak{s}_{i-1}$ or
$\d:\g_{-1}\ot\mathfrak{s}_i\to\mathfrak{s}_{i-1}$.
This latter corresponds to the bracket in $\g$ and hence our claim is proved.
 \end{proof}

Notice that the claim is stronger than the natural pairing
$[\mathfrak{s}_i,\mathfrak{s}_{-1}]\subset\mathfrak{s}_{i-1}$ since we only have
$\mathfrak{s}_{-1}\subset\g_{-1}$ and this distinction is crucial for \cite{KT}.

Finally let us apply the above results to jet-determinacy. A symmetry $X\in\mathcal{S}$
is called $s$-jet determined at $x\in M$ if $[X]_x^{s-1}=0$, but $[X]_x^s\ne0$
(here $[X]_x^k$ is the $k$-jet of $X$ in the standard, not weighted, jet-filtration).

 \begin{theorem}
If the Lie equation $\mathfrak{Lie}(\mathcal{F})$ has finite type, i.e. $\E_\kappa=0$ for some $\kappa<+\infty$,
then any symmetry is determined by a finite jet.
 \end{theorem}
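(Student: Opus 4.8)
The plan is to reduce the claim to a statement about the weighted jet filtration $J(TM)_i$ and the subspaces $\E_i$, and then translate the vanishing $\E_\kappa=0$ into a bound on the standard jet order. The key point is that the weighted filtration $D_j$ on differential operators is \emph{coarser} than the standard filtration by order in a controlled way: an operator of standard order $k$ has weighted order at least $-k\nu$ (since each $F_{i_t}$ has $i_t\ge-\nu$), while an operator of weighted order $j$ has standard order at most $-j$. Dually, a vector field $X$ whose weighted $i$-jet vanishes, i.e. $X$ lies in the kernel defining $J(TM)_i$, has all coefficients $f_{jl}\in\mu_{\Delta,x}^{i+j}$; in particular, once $i$ is large enough (namely $i\ge s+\nu$), this forces the ordinary $s$-jet $[X]_x^s$ to vanish, because a function in $\mu_{\Delta,x}^{i+j}$ with $i+j\ge s+1$ vanishes to ordinary order $\ge s+1$ at $x$ (the weighted maximal ideal $\mu_{\Delta,x}$ is contained in the ordinary maximal ideal).

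First I would fix a regular point $x\in M$. Since $\mathfrak{Lie}(\mathcal{F})$ has finite type with $\E_\kappa=0$, the earlier Proposition giving $\mathcal{S}=\E_\kappa$ (in the finite-type case, via finite jet-determination from \cite{K1}) together with the proof of Theorem \ref{thm1} shows that the filtration $\mathcal{S}^j=\op{Ker}(\mathcal{S}\to\E_{j-1})$ terminates: $\mathcal{S}^{\kappa+1}=\op{Ker}(\mathcal{S}\to\E_\kappa)=\op{Ker}(\mathcal{S}\to 0)=\mathcal{S}$ would be wrong; rather $\mathfrak{s}_i=\op{Ker}(\E_i\to\E_{i-1})=0$ for $i\ge\kappa$, hence $\mathcal{S}^{\kappa}=\mathcal{S}^{\kappa+1}=\dots$, and since $\bigcap_j\mathcal{S}^j=0$ (a symmetry with vanishing weighted $\infty$-jet at $x$ is the zero field, by regularity and the identification $\mathcal{S}\cong\E_\kappa$), we get $\mathcal{S}^{\kappa}=0$. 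Therefore every nonzero $X\in\mathcal{S}$ satisfies $X\notin\mathcal{S}^{\kappa}$, i.e. the weighted jet $[X]_x$ is nonzero already at weighted order $<\kappa$: concretely, the image of $X$ in $\E_{\kappa-1}\subset J(TM)_{\kappa-1}$ is nonzero, so not all coefficients $f_{jl}$ lie in $\mu_{\Delta,x}^{\kappa-1+j}$.

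Next I would convert this into an ordinary jet statement. Because $\Delta_\nu=TM$, we have $\mu_{\Delta,x}^{m\nu}\subset\mu_x^{m}$ for the ordinary maximal ideal $\mu_x$ (indeed $\mu_{\Delta,x}\supset\mu_x\supset\mu_{\Delta,x}^{\nu}$, up to the standard comparison of the weighted and ordinary filtrations). Consequently, if $[X]_x^{s-1}=0$ in the ordinary sense with $s$ chosen so that $s\nu\ge$ (the largest weight appearing plus $\kappa$) — explicitly $s:=\kappa+\nu$ suffices — then all coefficients $f_{jl}$ of $X$ lie in a sufficiently high power of $\mu_{\Delta,x}$ to force $X\in\mathcal{S}^{\kappa}=0$. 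Hence a symmetry with vanishing ordinary $(\kappa+\nu-1)$-jet at $x$ vanishes identically, which is exactly the assertion that every symmetry is determined by a finite jet (of ordinary order at most $\kappa+\nu-1$ at any regular point, and by density of regular points and upper semicontinuity, at every point).

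The main obstacle I expect is the bookkeeping in the comparison between the weighted filtration $\mu_{\Delta,x}$ and the ordinary filtration $\mu_x$, and in particular making the constant relating weighted order $\kappa$ to ordinary jet order completely explicit and uniform near $x$; one must be careful that the coefficient functions $f_{jl}$ are attached to a frame $Z_{jl}\in\Delta_j\setminus\Delta_{j-1}$ whose weights $j$ range over $\{1,\dots,\nu\}$, so the shift between the two notions of jet order genuinely involves $\nu$, not merely $1$. Everything else is a direct consequence of the finite-type hypothesis $\E_\kappa=0$ and the already-established identification $\mathfrak{s}_i=\op{Ker}(\E_i\to\E_{i-1})\subset\g_i(x)$.
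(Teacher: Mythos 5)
Your proof is correct and takes essentially the same route as the paper: both arguments reduce finite jet-determination to the comparison between the weighted filtration $\mathcal{S}^i$ and the ordinary jet filtration, the paper simply citing the sharper two-sided inequality $i/\nu+1\le s\le i+1$ from \cite{K1}, whereas you re-derive the needed one-sided bound from the containment $\mu_x^s\subset\mu_{\Delta,x}^s$ applied to the frame coefficients $f_{jl}$, arriving at the cruder but sufficient estimate $s\le\kappa+\nu-1$. One caution: the claim in your opening paragraph that a function in $\mu_{\Delta,x}^{i+j}$ with $i+j\ge s+1$ vanishes to \emph{ordinary} order $\ge s+1$ is false (weighted order $k$ only forces ordinary order $\ge k/\nu$), but this reversed containment is never used in your actual argument, which correctly relies only on the direction ``ordinary flat $\Rightarrow$ weighted flat.''
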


The same statement also holds for automorphisms of a filtered geometry.

 \begin{proof}
By \cite{K1} (see formulae (9), (10) and those in between) if the field $X$ is $s$-jet determined
and belongs to the $i$-th filtration space $\mathcal{S}^i$ ($\Ll^i$ in loc.cit.), and so is mapped
to a non-zero element in the graded subspace $\g_i$ ($i\ge0$) of the Tanaka algebra
$\g=\g_{-\nu}\oplus\dots\oplus\g_i\oplus\dots$, then $i/\nu+1\le s\le i+1$.
 \end{proof}

\medskip

\textsc{Acknowledgment.} I am grateful to Dennis The for stimulating discussions.


 \vspace{-5pt} \hspace{-20pt} {\hbox to 12cm{ \hrulefill }}\vspace{-1pt}

{\footnotesize \hspace{-10pt} Institute of Mathematics and
Statistics, University of Troms\o, Troms\o\ 90-37, Norway.

\hspace{-10pt} E-mail: \quad boris.kruglikov\verb"@"uit.no} \vspace{-1pt}

\end{document}